\makeatletter \@addtoreset{equation}{section}
\newtheorem{theorem}{Theorem}[section]
\newtheorem{lemma}[theorem]{Lemma}
\newtheorem{prop}[theorem]{Proposition}
\newtheorem{remark}[theorem]{Remark}
\begin{document}
\begin{center}
{\large \bf On the positive zeros of generalized Narayana polynomials related to the Boros-Moll polynomials}
\end{center}
\begin{center}
James Jing Yu Zhao\\[6pt]

School of Mathematics,
Tianjin University, \\
Tianjin 300350, P.R. China\\[8pt]

Email: {\tt jjyzhao@tju.edu.cn}
\end{center}

\noindent\textbf{Abstract.}
The generalized Narayana polynomials $N_{n,m}(x)$ arose from the study of infinite log-concavity of the Boros-Moll polynomials. The real-rootedness of $N_{n,m}(x)$ had been proved by Chen, Yang and Zhang. They also showed that when $n\geq m+2$, each of the generalized Narayana polynomials has one and only one positive zero and $m$ negative zeros, where the negative zeros of $N_{n,m}(x)$ and $N_{n+1,m+1}(x)$ have interlacing relations.
In this paper, we study the properties of the positive zeros of $N_{n,m}(x)$ for $n\geq m+2$. We first obtain a new recurrence relation for the generalized Narayana polynomials. Based on this recurrence relation, we prove upper and lower bounds for the positive zeros of $N_{n,m}(x)$. Moreover, the monotonicity of the positive zeros of $N_{n,m}(x)$ are also proved by using the new recurrence relation.

\noindent \emph{AMS Classification 2020:} 05A10, 11B83, 26C10

\noindent \emph{Keywords:} Generalized Narayana polynomials, positive zeros, bounds, monotonicity

\section{Introduction}

Let $n>k\geq 0$ be integers. The classical Narayana number $N(n,k)$, named after T.V. Narayana \cite{Narayana}, is given by
$$N(n,k)=\frac{1}{n}{n\choose k}{n\choose k+1},$$
which appears in OEIS as A001263 in \cite{Sloane}. It is well known that the Narayana numbers refine the Catalan numbers $C_n=\frac{1}{n+1}{2n\choose n}$ since
$$\sum_{k=0}^{n-1} N(n,k)=C_n.$$
For more information on Catalan numbers, see \cite{Catalan, stanley}.
The generating polynomials of $N(n,k)$, namely
$$
\sum_{k=0}^{n-1} N(n,k) x^k,
$$
are called the Narayana polynomials.
The Narayana numbers and the Narayana polynomials have attracted a lot of attention,
and been extensively studied in relation to algebraic combinatorics \cite{Williams2005, Armstrong, Avaletal, ARR2015, MMY2019}, number theory \cite{GuoJiang2017, SaganTirrell2020}, probability and statistics \cite{Sulanke02, Branden2004, Avaletal, Ful-Rol, C-Y-Z-2021}, geometry \cite{ARR2015} and especially to enumerative combinatorics \cite{Sulanke04, Kamioka, Avaletal,  CYZ1, CYZ2, WangYang, SaganTirrell2020} by the mathematical community.

It is well known that the Narayana polynomials have only real zeros, see \cite{LiuWang}. In 2018, Chen, Yang and Zhang \cite{CYZ2} studied a generalization of the Narayana polynomials in the following form
\begin{align}\label{eq-geNaPoCYZ}
N_{n,m}(x)=\sum\limits_{k=0}^n \left({n\choose k}{m\choose k}-{n\choose k+1}{m\choose k-1}\right) x^k,
\end{align}
where $m$ and $n$ are nonnegative integers.
Clearly, when $n=m+1$, the generalized Narayana polynomials $N_{n,m}(x)$ reduce to the classical Narayana polynomials, say,
\begin{align*}
N_{m+1,m}(x)=\frac{1}{m+1}\sum_{k=0}^{m}{m+1\choose k}{m+1\choose k+1}x^k=\sum_{k=0}^{m}N(m+1,k)x^k.
\end{align*}

It should be mentioned that the generalized Narayana polynomials $N_{n,m}(x)$ arose in the study of infinite log-concavity of the Boros-Moll polynomials, which were first introduced by Boros and Moll \cite{Boros-Moll} while studying a quartic integral. The generalized Narayana polynomials $N_{n,m}(x)$ appear to have some interesting properties on their zeros. For instance, Chen, Yang and Zhang \cite{CYZ2} had showed the real-rootedness of $N_{n,m}(x)$ for nonnegative integers $m$ and $n$. Moreover, the real zeros of $N_{n,m}(x)$ have interlacing relations for $n\leq m+1$. Specifically, for $n\geq m+2$, they gave the following result on positive zeros.
\begin{theorem}\cite[Theorem 3.4]{CYZ2}\label{thm-unipzo}
For any $m\geq 0$ and $n\geq m$, the polynomial $N_{n,m}(x)$ has only real zeros.
If $n\geq m+2$, then $N_{n,m}(x)$ has one and only one positive zero.
\end{theorem}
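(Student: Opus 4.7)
The plan is to split Theorem~\ref{thm-unipzo} into its two assertions (real-rootedness and the count of positive zeros) and handle each separately. The count of positive zeros follows cleanly from Descartes' rule of signs together with an intermediate value argument, while the real-rootedness is deeper and, in my view, will be the main obstacle.

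For the positive-zero count, the first step is to rewrite the coefficient
\[
a_k := \binom{n}{k}\binom{m}{k} - \binom{n}{k+1}\binom{m}{k-1}
\]
of $x^k$ in \eqref{eq-geNaPoCYZ} in a transparent form. Factoring out $\binom{n}{k}\binom{m}{k}$ from the two terms and combining yields, for $1 \le k \le m$,
\[
a_k = \binom{n}{k}\binom{m}{k}\,\frac{(m+1) - k(n-m)}{(k+1)(m-k+1)},
\]
together with $a_0 = 1$, $a_{m+1} = -\binom{n}{m+2}$, and $a_k = 0$ for $k \ge m+2$. When $n \ge m+2$ the polynomial $N_{n,m}(x)$ therefore has degree exactly $m+1$, with positive constant term and negative leading coefficient. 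The linear factor $(m+1) - k(n-m)$ is strictly decreasing in $k$, so the sign sequence $(a_0, a_1, \dots, a_{m+1})$ exhibits exactly one sign change (ignoring at most one vanishing entry). Descartes' rule of signs then caps the number of positive real zeros at one, while the signs $N_{n,m}(0)=1$ and $N_{n,m}(x) \to -\infty$ as $x \to +\infty$ produce at least one such zero via the intermediate value theorem. Hence there is exactly one positive zero.

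For the real-rootedness, I would argue by induction on $n \ge m$, with the base case $n = m$ treated directly (the coefficients then reduce to Newton-type differences of a log-concave sequence, and the resulting polynomial is short enough to verify by hand or by appeal to a standard log-concavity criterion). The inductive step would rest on a recurrence of the form
\[
N_{n,m}(x) = A_{n,m}(x)\,N_{n-1,m}(x) + B_{n,m}(x)\,N_{n-1,m-1}(x),
\]
with $A_{n,m}, B_{n,m}$ of low degree, combined with a Hermite-Kakeya-Obreschkoff-type lemma to propagate an interlacing invariant across the family. The main obstacle is discovering such a recurrence with coefficients tractable enough to verify the interlacing hypothesis: brute-force manipulation of \eqref{eq-geNaPoCYZ} produces unwieldy expressions, and a cleaner path may require identifying $N_{n,m}(x)$ explicitly with a classical orthogonal polynomial (for example a Jacobi polynomial evaluated at a suitable point), in which case real-rootedness would be immediate.
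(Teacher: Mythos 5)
Your treatment of the second assertion is correct and is essentially the route this paper indicates for it: the coefficient rewriting
\[
\binom{n}{k}\binom{m}{k}-\binom{n}{k+1}\binom{m}{k-1}
=\binom{n}{k}\binom{m}{k}\,\frac{(m+1)-k(n-m)}{(k+1)(m-k+1)},\qquad 1\le k\le m,
\]
checks out (it is the same linear factor $(m-n)k+m+1$ the paper uses in Section~\ref{S-R}), the linear factor is strictly decreasing in $k$ because $n-m\ge 2$, so the coefficient sequence has exactly one sign change, and Descartes' rule \cite{Curtiss} together with $N_{n,m}(0)=1$ and the negative leading coefficient $-\binom{n}{m+2}$ gives exactly one positive zero. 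This matches the paper's remark that existence and uniqueness of the positive zero follow from the Intermediate Value Theorem and Descartes' rule; note that the theorem itself is quoted from \cite{CYZ2}, so the paper does not reprove either assertion in detail.

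For the first assertion (real-rootedness for all $n\ge m$) your proposal has a genuine gap: you only describe a strategy. The claimed recurrence $N_{n,m}(x)=A_{n,m}(x)N_{n-1,m}(x)+B_{n,m}(x)N_{n-1,m-1}(x)$ is never exhibited, the base case is not actually verified, and no interlacing invariant is formulated or propagated, so nothing in the write-up establishes real-rootedness; the suggested identification with Jacobi polynomials is likewise left as speculation. Your instinct about the shape of the argument is reasonable --- the proof in \cite{CYZ2} does proceed via an interlacing criterion of Hermite--Kakeya--Obreschkoff type, namely the Liu--Wang criterion \cite{LiuWang}, combined with the theory of P\'olya frequency sequences \cite{Wang-Yeh} --- but as written the real-rootedness half of Theorem~\ref{thm-unipzo} remains unproved, and completing it would require actually producing the recurrence (or PF-sequence argument) and verifying the interlacing hypotheses, which is precisely the work you defer.
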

The real-rootedness of $N_{n,m}(x)$ was proved by a criterion for determining whether two polynomials have interlaced zeros established by Liu and Wang \cite[Theorem 2.3]{LiuWang} and the theory of P\'{o}lya frequency sequences \cite{Wang-Yeh}.
The existence and uniqueness of the positive zero of each $N_{n,m}(x)$ for $n\geq m+2$ can be obtained easily by using the well known Intermediate Value Theorem together with Descartes's Rule (see \cite{Curtiss}).
Chen et al. \cite[Theorem 3.4]{CYZ2} also proved that $N_{n,m}(x)$ has $m$ negative zeros for $n\geq m+2$ with $m\geq 0$, and the negative zeros of $N_{n,m}(x)$ and $N_{n+1,m+1}(x)$ have interlacing relations.
Many well known functions (or polynomials) have interlacing properties for their zeros. For instance, Cho and Chung \cite{Cho-Chung} had proved that the positive zeros of $\nu$-parameter families of Bessel functions are simultaneously interlaced under certain conditions.
Although the real-rootedness of $N_{n,m}(x)$ and the interlace feature of their negative zeros had been deeply studied, the properties of the positive zeros of $N_{n,m}(x)$ were still left unknown.

This paper mainly concerns with the analytic properties of the positive zeros of the generalized Narayana polynomials $N_{n,m}(x)$.
Note that the case of $n=m+2$ is trivial since $N_{m+2,m}(1)=0$ by the Chu-Vandermonde convolution (see \cite{Gould} or \cite[\S 5.1]{GKP}). For $n\geq m+3$ with $m\geq 0$, we give upper and lower bounds for the the positive zeros of $N_{n,m}(x)$. Furthermore, we also show monotonicity of the positive zeros of $N_{n,m}(x)$. These two main results are proved by using mathematical induction together with a new three-term recurrence relation of the generalized Narayana polynomials.

This paper is organized as follows. In Section \ref{S-R}, we give a new three-term recurrence relation of $N_{n,m}(x)$  and show a proof by hands. An alternative proof by symbolic method established by Chen, Hou and Mu \cite{CHM} is also mentioned. In Section \ref{S-3}, we prove the first main result of this paper, the upper and lower bounds for the positive zeros of $N_{n,m}(x)$. The second main result of this paper, say, the monotonicity of the positive zeros of $N_{n,m}(x)$, are stated in Section \ref{S-M}.

\section{Recurrence relation}\label{S-R}
In this section we show a new three-term recurrence relation of the generalized Narayana polynomials $N_{n,m}(x)$ defined in \eqref{eq-geNaPoCYZ}. This recurrence relation will be used to prove the main results of this paper, the bounds and the monotonicity of the positive zeros of $N_{n,m}(x)$ for $n\geq m+3$ and $m\geq 0$.

The main result of this seciotn is as follows.
\begin{theorem}\label{thm-main-rec-gNyp}
For any integers $m\geq 0$ and $n\geq 1$, we have
\begin{align}\label{eq-rec-gelNaya-m}
c_{n,m}(x)N_{n,m+1}(x)=a_{n,m}(x) N_{n,m}(x)+b_{n,m}(x) N_{n-1,m}(x),
\end{align}
where
\begin{equation}\label{eq-abcnmx}
\left\{
\begin{aligned}
&a_{n,m}(x)=(m+2-n)(m^2-n^2+4m+3)x-2n,\\[5pt]
&b_{n,m}(x)=n[(m+2-n)(m+1-n)x-2](x-1),\\[5pt]
&c_{n,m}(x)=(m+3)(m+2-n)(m+1-n)x.
\end{aligned}
\right.
\end{equation}
\end{theorem}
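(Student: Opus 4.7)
The plan is to verify \eqref{eq-rec-gelNaya-m} by comparing the coefficient of $x^k$ on each side, using the explicit formula \eqref{eq-geNaPoCYZ}. Write $A=(m+2-n)(m^2-n^2+4m+3)$ and $B=(m+2-n)(m+1-n)$, so that $a_{n,m}(x)=Ax-2n$, $b_{n,m}(x)=n\bigl(Bx^2-(B+2)x+2\bigr)$, and $c_{n,m}(x)=(m+3)Bx$. Denote the coefficient of $x^k$ in $N_{n,m}(x)$ by
\[
f(n,m,k)=\binom{n}{k}\binom{m}{k}-\binom{n}{k+1}\binom{m}{k-1}.
\]
Extracting the coefficient of $x^k$ on each side of \eqref{eq-rec-gelNaya-m} then reduces the claim to the scalar identity
\begin{align*}
(m+3)B\,f(n,m+1,k-1)={}& A\,f(n,m,k-1)-2n\,f(n,m,k)\\
&+nB\,f(n-1,m,k-2)-n(B+2)\,f(n-1,m,k-1)+2n\,f(n-1,m,k),
\end{align*}
valid for all integers $k$ (the boundary cases $k<0$ or $k>n$ reduce to $0=0$ since the relevant binomial coefficients vanish).

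Next I would normalize every binomial appearing above against $\binom{n}{k}\binom{m}{k}$ using the standard absorption rules such as $\binom{n-1}{k}=\tfrac{n-k}{n}\binom{n}{k}$, $\binom{m+1}{k}=\tfrac{m+1}{m-k+1}\binom{m}{k}$, $\binom{n}{k+1}=\tfrac{n-k}{k+1}\binom{n}{k}$, and $\binom{m}{k-1}=\tfrac{k}{m-k+1}\binom{m}{k}$, together with their iterates needed for the shifted arguments $k-1$ and $k-2$. Clearing denominators by a suitable polynomial in $(k,m,n)$, whose factors come only from $k+1$, $k+2$, $m-k+1$, and $m-k+2$, cancels the common binomial factor $\binom{n}{k}\binom{m}{k}$ and leaves a pure polynomial identity in the three variables $n,m,k$, which can then be checked by direct expansion. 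By linearity of $f$ in its two constituent binomial products, I would split this verification into two parallel sub-identities, one for the $\binom{n}{k}\binom{m}{k}$ piece and one for the $\binom{n}{k+1}\binom{m}{k-1}$ piece, so that neither individual expansion becomes unmanageable.

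The main obstacle is not conceptual but algebraic bookkeeping: after clearing denominators one is left with a polynomial identity of moderate degree in three variables whose expansion produces several dozen monomials, and a single sign or factor error would be very hard to locate by inspection. To guard against this, I would independently rediscover the recurrence by running Zeilberger's algorithm (or equivalently the symbolic method of Chen, Hou and Mu cited in the introduction to this section) on the hypergeometric summand $\binom{n}{k}\binom{m}{k}-\binom{n}{k+1}\binom{m}{k-1}$; the certificate returned should recover exactly $(a_{n,m},b_{n,m},c_{n,m})$ in \eqref{eq-abcnmx}, providing an automated cross-check of the hand computation.
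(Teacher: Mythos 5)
Your proposal is correct and takes essentially the same route as the paper: the paper also proves \eqref{eq-rec-gelNaya-m} by extracting the coefficient of $x^k$ from both sides via \eqref{eq-geNaPoCYZ} and verifying the resulting binomial-coefficient identity (it merely organizes the computation by splitting the right-hand side into three telescoping pieces instead of normalizing everything against $\binom{n}{k}\binom{m}{k}$), and your Zeilberger cross-check is exactly the alternative proof mentioned in the remark at the end of Section \ref{S-R}. One bookkeeping caveat: after normalization the cleared denominators also involve factors such as $n$, $n-k+1$ and $m-k+3$ (e.g.\ from $\binom{n-1}{k-1}/\binom{n}{k}$ and $\binom{m}{k-3}/\binom{m}{k}$), not only $k+1$, $k+2$, $m-k+1$, $m-k+2$; this does not affect the validity of the method, only the size of the final polynomial identity to expand.
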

\begin{proof}
First, let us calculate the right-hand side of \eqref{eq-rec-gelNaya-m}.
For convenience, rewrite
$$
a_{n,m}(x)=Ax-2n\quad {\rm and}\quad  b_{n,m}(x)=Bx^2-Cx+2n,
$$
where
\begin{align*}
&A=(m+2-n)(m^2-n^2+4m+3),\\
&B=n(m+2-n)(m+1-n),\\
&C=B+2n.
\end{align*}
Then the right-hand side of \eqref{eq-rec-gelNaya-m} can be expressed as a sum of three parts, that is,
\begin{align}
&\ a_{n,m}(x)N_{n,m}(x)+b_{n,m}(x)N_{n-1,m}(x)\nonumber\\
=&\ (Ax-2n) N_{n,m}(x)+(Bx^2-Cx+2n)N_{n-1,m}(x)\nonumber\\
=&\ 2n\left(N_{n-1,m}(x)-N_{n,m}(x)\right)+\left(Ax N_{n,m}(x)-Cx N_{n-1,m}(x)\right)
+Bx^2 N_{n-1,m}(x).\label{eq-2.3}
\end{align}

Observe that the coefficients of $x^k$ in the summand of \eqref{eq-geNaPoCYZ} can be rewritten as
\begin{align*}
 {n\choose k}{m\choose k}-{n\choose k+1}{m\choose k-1}
={n+1\choose k+1}{m+1\choose k}\frac{(m-n)k+m+1}{(n+1)(m+1)}.
\end{align*}
So it follows that
\begin{align}
2n\left(N_{n-1,m}(x)-N_{n,m}(x)\right)
&=2n\sum_{k=0}^n {m\choose k-1}{n\choose k}\frac{n-m-1}{n}x^k\nonumber\\
&=2nx\sum_{k=1}^n {m\choose k-1}{n\choose k}\frac{n-m-1}{n}x^{k-1}\nonumber\\
&=x\sum_{k=0}^{n} 2(n-m-1) {m\choose k}{n\choose k+1}x^k. \label{eq-RHS-1}
\end{align}
In order to calculate the second part of \eqref{eq-2.3}, set $A=C+D$, where $D=(m+1-n)(m^2-mn+5m-n+6)$.
Then we have
\begin{align}
Ax N_{n,m}(x)-Cx N_{n-1,m}(x)
&=(C+D)x N_{n,m}(x)-Cx N_{n-1,m}(x)\nonumber\\
&=Cx\left(N_{n,m}(x)-N_{n-1,m}(x)\right)+Dx N_{n,m}(x)\nonumber\\
&=x \sum_{k=0}^{n} C{m\choose k-1}{n\choose k}\frac{m+1-n}{n}x^k\nonumber\\
&\qquad +x \sum_{k=0}^{n} D{n+1\choose k+1}{m+1\choose k}\frac{(m-n)k+m+1}{(n+1)(m+1)}x^k.\label{eq-RHS-2}
\end{align}
For the last part of \eqref{eq-2.3}, we have
\begin{align}
Bx^2 N_{n-1,m}(x)
&=Bx \sum_{k=0}^{n-1} {n\choose k+1}{m+1\choose k}\frac{(m+1-n)k+m+1}{n(m+1)}x^{k+1}\nonumber\\
&=x \sum_{k=0}^{n} B {n\choose k}{m+1\choose k-1}\frac{(m+1-n)k+n}{n(m+1)}x^k. \label{eq-RHS-3}
\end{align}

Now we have that the right-hand side of \eqref{eq-rec-gelNaya-m} is equal to a sum of three parts as given in \eqref{eq-RHS-1}, \eqref{eq-RHS-2} and \eqref{eq-RHS-3}.
Clearly, the left-hand side of \eqref{eq-rec-gelNaya-m} is
$$
c_{n,m}(x)N_{n,m+1}(x)
=x\sum_{k=0}^n (m+3)(m+2-n)(m+1-n) {n+1\choose k+1}{m+2\choose k}\frac{(m+1-n)k+m+2}{(n+1)(m+2)}x^k.
$$
By comparing the coefficients of $x^k$ in the summands of \eqref{eq-RHS-1}, \eqref{eq-RHS-2}, \eqref{eq-RHS-3} and $c_{n,m}(x)N_{n,m+1}(x)$, we find that
\begin{align*}
&2(n-m-1){m\choose k}{n\choose k+1}+C{m\choose k-1}{n\choose k}\frac{m+1-n}{n}
+D{n+1\choose k+1}{m+1\choose k}\frac{(m-n)k+m+1}{(n+1)(m+1)}\nonumber\\[5pt]
&\quad +B{n\choose k}{m+1\choose k-1}\frac{(m+1-n)k+n}{n(m+1)}\nonumber\\[5pt]
=&\ (m+3)(m+2-n)(m+1-n) {n+1\choose k+1}{m+2\choose k}\frac{(m+1-n)k+m+2}{(n+1)(m+2)},
\end{align*}
where $B,C$ and $D$ are defined above.
Therefore it follows that \eqref{eq-rec-gelNaya-m} holds for all integers $m\geq 0$ and $n\geq 1$.
This completes the proof.
\end{proof}

Specifically, when $n\neq m+1$ and $n\neq m+2$, from \eqref{eq-rec-gelNaya-m} we have
\begin{align}\label{eq-rec-gelNaya-mv}
N_{n,m+1}(x)
=\frac{a_{n,m}(x)}{c_{n,m}(x)}N_{n,m}(x)+\frac{b_{n,m}(x)}{c_{n,m}(x)}N_{n-1,m}(x),
\end{align}
where $a_{n,m}(x)$, $b_{n,m}(x)$ and $c_{n,m}(x)$ are given by \eqref{eq-abcnmx}. In the remainder of this paper, we shall use the recurrence relation \eqref{eq-rec-gelNaya-mv} to prove our main results.

\begin{remark}
It should be mentioned that Theorem \ref{thm-main-rec-gNyp} can also be proved by using the extended Zeilberger algorithm, a symbolic method established by Chen, Hou and Mu \cite{CHM}. See \cite{C-Y-Z-2021} for example.
\end{remark}

\section{The bounds}\label{S-3}
The aim of this section is to prove the first main result of this paper, the lower and upper bounds of the positive zeros of $N_{n,m}(x)$.
\begin{theorem}\label{Thm-zero-bounds}
For $m\geq 0$ and $n\geq m+3$, denote by $r_{n,m}^{+}$ the positive zero of the generalized Narayana polynomial $N_{n,m}(x)$. Then we have that
\begin{align}\label{eq-zero-bounds}
\frac{2(n+1)}{(m+1-n)((m+2)^2-(n+1)^2-1)}
<r_{n,m}^{+}
\leq \frac{2}{(m-n)(m+1-n)},
\end{align}
where the equality holds only for $m=0$.
\end{theorem}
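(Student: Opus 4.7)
The plan is to prove both inequalities simultaneously by induction on $m\geq 0$, using the recurrence \eqref{eq-rec-gelNaya-mv} to pass from information about $N_{n,m}$ and $N_{n-1,m}$ to information about $N_{n,m+1}$. The guiding sign principle is that $N_{n,m}(0)=1$ and the leading coefficient of $N_{n,m}(x)$ is $-\binom{n}{m+2}<0$, so the unique positive simple zero $r_{n,m}^{+}$ (guaranteed by Theorem \ref{thm-unipzo}) separates $(0,+\infty)$ into a region where $N_{n,m}>0$ and one where $N_{n,m}<0$; in addition $c_{n,m}(x)>0$ for $x>0$ when $n\geq m+3$. Therefore bounding $r_{n,m+1}^{+}$ from above by $U$ is equivalent to showing $N_{n,m+1}(U)\leq 0$, and bounding from below by $L$ is equivalent to showing $N_{n,m+1}(L)>0$. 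Via the recurrence both questions become sign analyses of $a_{n,m}N_{n,m}+b_{n,m}N_{n-1,m}$ at cleverly chosen evaluation points, which is the heart of the argument. The base case $m=0$ is direct: $N_{n,0}(x)=1-\binom{n}{2}x$, so $r_{n,0}^{+}=U_{n,0}=\tfrac{2}{n(n-1)}$ (which accounts for the exceptional equality at $m=0$), and $L_{n,0}<r_{n,0}^{+}$ reduces by cross-multiplication to $n>2$.

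For the upper bound in the inductive step $m\to m+1$ (valid for $n\geq m+4$), I would evaluate the recurrence at $x_0:=U_{n,m+1}=\tfrac{2}{(n-m-1)(n-m-2)}$. Two happy coincidences drive this choice: first, $U_{n,m+1}=U_{n-1,m}$; and second, $(m+2-n)(m+1-n)x_0=2$, which forces $b_{n,m}(x_0)=0$ identically. A short calculation then gives $a_{n,m}(x_0)=\tfrac{2(m+1)(n-m-3)}{n-m-1}>0$ for $n\geq m+4$. The induction hypothesis yields $N_{n,m}(x_0)<0$ (since $x_0>U_{n,m}\geq r_{n,m}^{+}$) and $N_{n-1,m}(x_0)\leq 0$ (since $x_0=U_{n-1,m}\geq r_{n-1,m}^{+}$). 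The recurrence then forces $N_{n,m+1}(x_0)<0$ strictly, yielding $r_{n,m+1}^{+}<U_{n,m+1}$ and explaining why equality persists only in the base case.

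For the lower bound the dual trick is to evaluate at $y_0:=L_{n-1,m}=\tfrac{2n}{(m+2-n)(m^2-n^2+4m+3)}$, whose denominator is exactly the coefficient of $x$ in $a_{n,m}(x)$; consequently $a_{n,m}(y_0)=0$ by construction, and the recurrence at $y_0$ collapses to $c_{n,m}(y_0)N_{n,m+1}(y_0)=b_{n,m}(y_0)N_{n-1,m}(y_0)$. The induction hypothesis gives $N_{n-1,m}(y_0)>0$ since $y_0=L_{n-1,m}<r_{n-1,m}^{+}$. To determine the sign of $b_{n,m}(y_0)=n[(m+2-n)(m+1-n)y_0-2](y_0-1)$, I would verify that $(m+2-n)(m+1-n)y_0<2$, which after substitution of $y_0$ simplifies to $n-m>3$, and that $y_0<1$, which follows from $y_0\leq U_{n-1,m}\leq\tfrac{1}{3}$ in the range $n\geq m+4$. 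Both bracketed factors being negative forces $b_{n,m}(y_0)>0$, hence $N_{n,m+1}(y_0)>0$ and $r_{n,m+1}^{+}>y_0=L_{n-1,m}$. One final algebraic comparison is needed: $L_{n-1,m}\geq L_{n,m+1}$ for $n\geq m+4$. Since the two denominators share the factor $(m+2-n)$, cross-multiplying reduces this to $(n-m-1)(n-m-3)>0$, which holds throughout the required range.

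The principal difficulty, in my view, is spotting the two evaluation points that make the recurrence collapse: $x_0=U_{n,m+1}=U_{n-1,m}$ for the upper bound (annihilating $b_{n,m}$) and $y_0=L_{n-1,m}$ for the lower bound (annihilating $a_{n,m}$, because the claimed lower bound is essentially engineered from the zero of the $x$-coefficient of $a_{n,m}$). Once these two points are identified, the remaining verifications — computing $a_{n,m}(x_0)$, signing $b_{n,m}(y_0)$, and proving $L_{n-1,m}\geq L_{n,m+1}$ — are polynomial identities in $(m,n)$ that are tedious but routine.
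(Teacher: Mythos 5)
Your proposal is correct, and its overall framework (induction on $m$ driven by the recurrence \eqref{eq-rec-gelNaya-mv} and the sign dichotomy of Lemma \ref{lem-sign}) as well as its upper-bound step coincide with the paper's proof: both evaluate at $x=\frac{2}{(m+1-n)(m+2-n)}$, where $b_{n,m}$ vanishes, and use $a_{n,m}>0$ and $N_{n,m}<0$ there to get strictness for $m\geq 1$. The lower-bound step, however, is genuinely different and in fact cleaner. The paper evaluates the recurrence at the claimed lower bound $x_1=L_{n,m+1}$ itself; there the sign of $N_{n,m}(x_1)$ is not determined (the paper exhibits both signs for $m=1$), so it must split into two cases and, when $N_{n,m}(x_1)>0$, invoke Lemma \ref{lemma-mono-on-n} together with the positivity of $a_{n,m}(x_1)+b_{n,m}(x_1)$. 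You instead evaluate at $y_0=L_{n-1,m}$, which is exactly the zero of $a_{n,m}(x)$ (since $(m+2)^2-n^2-1=m^2-n^2+4m+3$), so the term of ambiguous sign disappears and the recurrence collapses to $b_{n,m}(y_0)N_{n-1,m}(y_0)/c_{n,m}(y_0)$, whose sign is forced by the induction hypothesis at $(n-1,m)$; your auxiliary simplifications check out ($(m+2-n)(m+1-n)y_0<2$ is equivalent to $n>m+3$, and the cross-multiplied difference $L_{n-1,m}-L_{n,m+1}$ has sign $(n-m-1)(n-m-3)>0$ for $n\geq m+4$). Your route buys: no case analysis, no use of Lemma \ref{lemma-mono-on-n} in this proof, only the single base case $m=0$ (the paper also treats $m=1$), and the strictly stronger conclusion $r_{n,m+1}^{+}>L_{n-1,m}$; moreover it isolates the same structural fact ($r_{n-1,m}^{+}$ exceeds the root of $a_{n,m}$) that the paper exploits later in proving Theorem \ref{thm-monocity}. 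The paper's route, by contrast, bounds $r_{n,m+1}^{+}$ by $L_{n,m+1}$ directly, without the extra comparison between the two lower bounds.
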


Before proving Theorem \ref{Thm-zero-bounds}, we first show the following two lemmas, which will be used in the proofs of our main results.
\begin{lemma}\label{lemma-mono-on-n}
Fixed $m\geq 0$ and $n\geq m+4$. Then for any $x>0$ we have
\begin{align*}
N_{n-1,m}(x)>N_{n,m}(x).
\end{align*}
\end{lemma}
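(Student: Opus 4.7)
The plan is to read off the inequality directly from the computation already carried out in the proof of Theorem \ref{thm-main-rec-gNyp}. Recall that the first part of the right-hand side of (2.3) was simplified in equation \eqref{eq-RHS-1} to
\begin{align*}
2n\bigl(N_{n-1,m}(x)-N_{n,m}(x)\bigr) = 2(n-m-1)\, x \sum_{k=0}^{n} \binom{m}{k}\binom{n}{k+1} x^{k}.
\end{align*}
This identity is the only ingredient I need.

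Dividing by $2n$, I obtain an explicit closed form for the difference
\begin{align*}
N_{n-1,m}(x)-N_{n,m}(x) = \frac{n-m-1}{n}\, x \sum_{k=0}^{n} \binom{m}{k}\binom{n}{k+1} x^{k}.
\end{align*}
Next I examine the sign of each factor under the hypothesis $n\geq m+4$ and $x>0$. The prefactor $(n-m-1)/n$ is at least $3/n>0$. The factor $x$ is positive by assumption. Inside the sum, every binomial product $\binom{m}{k}\binom{n}{k+1}$ is nonnegative, and the $k=0$ term equals $\binom{m}{0}\binom{n}{1}=n\geq 1$, so the sum is bounded below by $n>0$ for $x>0$. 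Multiplying these strictly positive quantities yields $N_{n-1,m}(x)-N_{n,m}(x)>0$, which is the claim.

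There is no real obstacle here: the lemma is essentially a byproduct of the manipulation of $a_{n,m}(x)N_{n,m}(x)+b_{n,m}(x)N_{n-1,m}(x)$ already performed in Section \ref{S-R}. The only thing to emphasize in the write-up is that the hypothesis $n\geq m+4$ (in fact even $n\geq m+2$ would suffice for this particular lemma, but the stronger bound matches the regime in which Lemma \ref{lemma-mono-on-n} will be applied later) is exactly what guarantees the sign $n-m-1>0$ of the leading constant produced by the previous calculation, so that the representation above is a sum of strictly positive terms for $x>0$.
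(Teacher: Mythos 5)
Your proposal is correct and follows essentially the same route as the paper: the paper's own proof simply re-derives from the definition \eqref{eq-geNaPoCYZ} the identity $N_{n-1,m}(x)-N_{n,m}(x)=\frac{n-m-1}{n}\sum_{k}\binom{n}{k}\binom{m}{k-1}x^{k}$, which is the same (index-shifted) positive-coefficient expansion you quote from \eqref{eq-RHS-1}, and concludes positivity for $x>0$ exactly as you do. Your observation that $n\geq m+2$ already suffices is also accurate; the stronger hypothesis $n\geq m+4$ only reflects where the lemma is later applied.
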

\begin{proof}
Note that when $n\geq m+2$, the polynomial $N_{n,m}(x)$ has degree $m+1$. So by \eqref{eq-geNaPoCYZ}, we have
\begin{align*}
N_{n-1,m}(x)-N_{n,m}(x)
=&\,\sum_{k=0}^{m+1}\left[{n-1\choose k}{m\choose k}-{n-1\choose k+1}{m\choose k-1}
 -\left({n\choose k}{m\choose k}-{n\choose k+1}{m\choose k-1}\right)\right]x^k\\
=&\,\sum_{k=0}^{m+1}\left(-{n-1\choose k-1}{m\choose k}+{n-1\choose k}{m\choose k-1}\right)x^k\\
=&\,\sum_{k=0}^{m+1}{n\choose k}{m\choose k-1}\frac{n-m-1}{n}x^k.
\end{align*}
It follows that $N_{n-1,m}(x)-N_{n,m}(x)>0$ for $m\geq0$, $n\geq m+4$ and $x>0$.
This completes the proof.
\end{proof}

\begin{lemma}\label{lem-sign}
Given integers $m\geq 0$ and $n\geq m+2$, let $N_{n,m}(x)$ be defined as in \eqref{eq-geNaPoCYZ}, and $r_{n,m}^{+}$ be the positive zero of $N_{n,m}(x)$. Then for any $x>0$, we have
\begin{itemize}
\item[$(i)$] $N_{n,m}(x)>0$ if and only if $x<r_{n,m}^{+}$;
\item[$(ii)$] $N_{n,m}(x)<0$ if and only if $x>r_{n,m}^{+}$.
\end{itemize}
\end{lemma}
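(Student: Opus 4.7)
The statement is essentially a sign-tracking consequence of Theorem \ref{thm-unipzo}, so the plan is to use continuity together with the known sign of $N_{n,m}$ at two distinguished points: the origin and ``infinity''.

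First I would observe that $N_{n,m}(0)=1>0$ directly from the definition \eqref{eq-geNaPoCYZ}, since only the $k=0$ summand survives and it equals $\binom{n}{0}\binom{m}{0}-\binom{n}{1}\binom{m}{-1}=1$. Next, I would identify the leading coefficient. From \eqref{eq-geNaPoCYZ}, the coefficient of $x^{m+1}$ is $\binom{n}{m+1}\binom{m}{m+1}-\binom{n}{m+2}\binom{m}{m}=-\binom{n}{m+2}$, which is strictly negative for $n\geq m+2$. In particular $N_{n,m}(x)$ has degree exactly $m+1$ and $\lim_{x\to+\infty}N_{n,m}(x)=-\infty$.

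Now by Theorem \ref{thm-unipzo}, for $n\geq m+2$ the polynomial $N_{n,m}(x)$ has exactly one positive zero $r_{n,m}^{+}$. Since $N_{n,m}(x)$ is continuous on $(0,\infty)$ and its only zero there is $r_{n,m}^{+}$, the Intermediate Value Theorem guarantees that $N_{n,m}(x)$ has constant sign on each of the two open intervals $(0,r_{n,m}^{+})$ and $(r_{n,m}^{+},\infty)$. Combining this with $N_{n,m}(0^{+})>0$ and $N_{n,m}(+\infty)<0$ forces $N_{n,m}(x)>0$ on $(0,r_{n,m}^{+})$ and $N_{n,m}(x)<0$ on $(r_{n,m}^{+},\infty)$, which is precisely the equivalence claimed in $(i)$ and $(ii)$.

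There is no genuine obstacle here; the only small subtlety is confirming that the leading coefficient does not vanish (so that we really have a well-defined limiting sign at $+\infty$), which is why the computation $\binom{n}{m+2}\neq 0$ under the hypothesis $n\geq m+2$ needs to be recorded explicitly. Everything else is a direct application of Theorem \ref{thm-unipzo} and continuity.
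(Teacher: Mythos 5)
Your proposal is correct and follows essentially the same route as the paper: both arguments rest on $N_{n,m}(0)=1>0$, the negative leading term $-\binom{n}{m+2}x^{m+1}$ forcing $N_{n,m}(x)\to-\infty$, the uniqueness of the positive zero from Theorem \ref{thm-unipzo}, and continuity via the Intermediate Value Theorem. The only difference is organizational (you argue constant sign on the two intervals, while the paper runs separate contradiction arguments for sufficiency and then deduces necessity), which is not a substantive departure.
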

\begin{proof}
We first prove the sufficiency of $(i)$ and $(ii)$.
Suppose $0<x<r_{n,m}^{+}$. It is clear that $N_{n,m}(x)\neq 0$ by Theorem \ref{thm-unipzo}, because each $N_{n,m}(x)$ has one and only one positive zero when $m\geq 0$ and $n\geq m+2$.
By \eqref{eq-geNaPoCYZ}, each polynomial $N_{n,m}(x)$ is a continuous function with respect to $x$ in $(-\infty,+\infty)$.
If $N_{n,m}(x)<0$, then by the continuity of $N_{n,m}(x)$ there must exist at least one positive zero in $(0,x)$ since $N_{n,m}(0)=1>0$ by \eqref{eq-geNaPoCYZ}, which contradicts Theorem \ref{thm-unipzo}. It follows that $N_{n,m}(x)>0$.

Suppose $x>r_{n,m}^{+}$. Clearly, $N_{n,m}(x)\neq 0$ by Theorem \ref{thm-unipzo}.
Note that for $n\geq m+2$, the leading term of $N_{n,m}(x)$ is $-{n\choose m+2}x^{m+1}$ by \eqref{eq-geNaPoCYZ}. Hence $\lim_{x\rightarrow +\infty} N_{n,m}(x)=-\infty$ for $n\geq m+2$ and $m\geq 0$.
Assume $N_{n,m}(x)>0$, then by the continuity of $N_{n,m}(x)$, there is at least one positive zero in $(x,+\infty)$, a contradiction to Theorem \ref{thm-unipzo}.
This leads to that $N_{n,m}(x)<0$.

To show the necessity of $(i)$, suppose $N_{n,m}(x)>0$ for $x>0$. Clearly, $x\neq r_{n,m}^{+}$. If $x>r_{n,m}^{+}$, then by the sufficiency of $(ii)$ proved above, we have $N_{n,m}(x)<0$, a contradiction. It follows that $x<r_{n,m}^{+}$.
The necessity of $(ii)$ is obtained in a similar argument and the details are omitted.
This completes the proof.
\end{proof}

Now we are able to show the proof of the first main result of this paper.

\noindent{\it Proof of Theorem \ref{Thm-zero-bounds}.}
It is clear that $N_{n,m}(x)$ are polynomials with real coefficients and leading term $-{n\choose m+2}x^{m+1}$ for $m\geq 0$ and $n\geq m+3$. Hence $N_{n,m}(x)$ are continuous functions with respect to $x$ in $(-\infty,+\infty)$. Since the degree of each $N_{n,m}(x)$ is $m+1$ for $n\geq m+3$ and $m\geq 0$, we shall prove the bounds in \eqref{eq-zero-bounds} by mathematical induction on $m$.

For $m=0$, we have $N_{n,0}(x)=-{n\choose 2}x+1$ by \eqref{eq-geNaPoCYZ}. It is clear that
$$
\frac{2(n+1)}{(1-n)(3-(n+1)^2)}<r_{n,0}^{+}=\frac{2}{n(n-1)}
$$
for $n\geq 3$. Hence we have \eqref{eq-zero-bounds} holds for $m=0$ with $n\geq 3$.

For $m=1$, by \eqref{eq-geNaPoCYZ} we have $N_{n,1}(x)=-{n\choose 3}x^2-\frac{n(n-3)}{2}x+1$. Then
$$
N_{n,1}\left(\frac{2}{(n-1)(n-2)}\right)=-\frac{2(n-3)}{3(n-1)(n-2)}<0,\quad n\geq 4.
$$
It follows from Lemma \ref{lem-sign} that $r_{n,1}^{+}<\frac{2}{(n-1)(n-2)}$ for $n\geq 4$.
Moreover,
$$
N_{n,1}\left(\frac{2(n+1)}{(n-2)((n+1)^2-8)}\right)
=\frac{2(n-3)[(n-1)(2n^2+n-25)+24]}{3(n-2)(n^2+2n-7)^2}>0,
\quad n\geq 4.
$$
By Lemma \ref{lem-sign}, $r_{n,1}^{+}>\frac{2(n+1)}{(n-2)((n+1)^2-8)}$ for $n\geq 4$. Thus we have \eqref{eq-zero-bounds} holds true for $m=1$ with $n\geq 4$.

Now we have proved \eqref{eq-zero-bounds} for $m=0$ and $m=1$ with $n\geq m+3$. Next assume \eqref{eq-zero-bounds} holds for $m\geq 1$ and $n\geq m+3$. We aim to prove that \eqref{eq-zero-bounds} holds for $m+1$ and $n\geq m+4$.
That is, for $n\geq m+4$,
\begin{align}\label{eq-zero-bounds-m+1}
x_1<r_{n,m+1}^{+}<x_2,
\end{align}
where
$$
x_1=\frac{2(n+1)}{(m+2-n)((m+3)^2-(n+1)^2-1)}
\quad {\rm and}
\quad x_2=\frac{2}{(m+1-n)(m+2-n)}.
$$
Clearly $0<x_1<x_2$ for $n\geq m+4$ and $m\geq 0$. In order to prove \eqref{eq-zero-bounds-m+1}, it is sufficient to prove that for $n\geq m+4$,
$$
N_{n,m+1}(x_1)>0
\quad {\rm and} \quad
N_{n,m+1}(x_2)<0.
$$

We first prove $N_{n,m+1}(x_1)>0$. For this purpose, we use the recurrence relation \eqref{eq-rec-gelNaya-mv} to express $N_{n,m+1}(x_1)$ as
\begin{align}\label{eq-rec-proofm+1}
N_{n,m+1}(x_1)
=\frac{a_{n,m}(x_1)}{c_{n,m}(x_1)}N_{n,m}(x_1)
 +\frac{b_{n,m}(x_1)}{c_{n,m}(x_1)}N_{n-1,m}(x_1).
\end{align}
For $m\geq 0$ and $n\geq m+4$, it is clear that $0<x_1<1$ and hence $c_{n,m}(x_1)>0$ by \eqref{eq-abcnmx}.
By a simple calculation we get
$$
a_{n,m}(x_1)=-\frac{2(n-m-3)(n-m-1)}{(n+m+4)(n-m-2)+1}<0,\quad n\geq m+4.
$$
Observe that for $m\geq 0$ and $n\geq m+4$,
$$
(m+2-n)(m+1-n)x_1-2=-\frac{2(m+2)(n-m-3)}{(n+m+4)(n-m-2)+1}<0.
$$
So
$$
b_{n,m}(x_1)
=n[(m+2-n)(m+1-n)x_1-2](x_1-1)>0,\quad n\geq m+4.
$$
It follows that for $m\geq 0$ and $n\ge m+4$,
$$
\frac{a_{n,m}(x_1)}{c_{n,m}(x_1)}<0 \quad
{\rm and}
\quad
\frac{b_{n,m}(x_1)}{c_{n,m}(x_1)}>0.
$$

To determine the sign of $N_{n,m+1}(x_1)$ it remains to ensure the sign of $N_{n,m}(x_1)$ and $N_{n-1,m}(x_1)$.
We claim that $N_{n-1,m}(x_1)>0$ for $m\geq 0$ and $n\geq m+4$.
By hypothesis, we have
$$
\frac{2n}{(m+2-n)((m+2)^2-n^2-1)}<r_{n-1,m}^{+}.
$$
This leads to
\begin{align*}
&\ x_1-\frac{2n}{(m+2-n)((m+2)^2-n^2-1)}\\
=&\ -\frac{2(n-m-3)(n-m-1)}{(n-m-2)[(n+m+4)(n-m-2)+1][n^2-(m+1)(m+3)]}\\
<&\ 0
\end{align*}
for $m\geq 0$ and $n\geq m+4$.
So
$$
0<x_1<\frac{2n}{(m+2-n)((m+2)^2-n^2-1)}<r_{n-1,m}^{+}.
$$
It follows from Lemma \ref{lem-sign} that $N_{n-1,m}(x_1)>0$.
Thus for $m\geq 0$ and $n\geq m+4$, we have
\begin{align}\label{eq-boprm1}
\frac{b_{n,m}(x_1)}{c_{n,m}(x_1)}N_{n-1,m}(x_1)>0.
\end{align}

Now let us consider the sign of $N_{n,m}(x_1)$.
Notice that $N_{n,m}(x_1)$ can not be negative for all $n\geq m+4$.
For example, when $m=1$,
$$
N_{n,1}(x_1)=\frac{n^5-52n^4+123n^3+1018n^2-4666n+5292}{3(n-3)^2 (n^2+2n-14)^2}.
$$
This yields $N_{n,1}(x_1)=-61/49$ for $n=5$, and
$N_{n,1}(x_1)=48074/410346049$ for $n=50$.
So we shall discuss in two cases.

\nointerlineskip {\bf Case 1.}
$N_{n,m}(x_1)\leq 0$. In this case we have $\frac{a_{n,m}(x_1)}{c_{n,m}(x_1)}N_{n,m}(x_1)\geq 0$, and hence $N_{n,m+1}(x_1)>0$ for $m\geq 0$ and $n\geq m+4$ by \eqref{eq-rec-proofm+1} and \eqref{eq-boprm1}.

\nointerlineskip {\bf Case 2.}
$N_{n,m}(x_1)>0$. Since $x_1>0$, by Lemma \ref{lemma-mono-on-n} we have that $N_{n-1,m}(x_1)> N_{n,m}(x_1)$ for $m\geq 0$ and $n\geq m+4$. Then by \eqref{eq-rec-gelNaya-mv},
\begin{align*}
N_{n,m+1}(x_1)
&\,=\frac{a_{n,m}(x_1)}{c_{n,m}(x_1)}N_{n,m}(x_1)
  +\frac{b_{n,m}(x_1)}{c_{n,m}(x_1)}N_{n-1,m}(x_1)\\
&\,>\frac{a_{n,m}(x_1)+b_{n,m}(x_1)}{c_{n,m}(x_1)}N_{n,m}(x_1),
\end{align*}
where
\begin{align*}
a_{n,m}(x_1)+b_{n,m}(x_1)
&\,=\frac{2(n+1)(n-m-3)(n-m-1)[(n-m-3)(m+1)(n+m+4)-2]}{[(n+m+4)(n-m-2)+1]^2 (n-m-2)}>0
\end{align*}
for $m\geq 0$ and $n\geq m+4$. Therefore $N_{n,m+1}(x_1)>0$ for $m\geq 0$ and $n\geq m+4$.

Thus in both cases, it follows that $N_{n,m+1}(x_1)>0$
for all $m\geq 1$ and $n\geq m+4$.

It remains to prove the inequality $N_{n,m+1}(x_2)<0$ for $n\geq m+4$. By the recurrence relation \eqref{eq-rec-gelNaya-mv} we have
\begin{align*}
N_{n,m+1}(x_2)
&=\frac{a_{n,m}(x_2)}{c_{n,m}(x_2)}N_{n,m}(x_2)
 +\frac{b_{n,m}(x_2)}{c_{n,m}(x_2)}N_{n-1,m}(x_2).
\end{align*}
Observe that $(m+2-n)(m+1-n)x_2-2=0$ for $n\geq m+4$. So $b_{n,m}(x_2)=0$. Thus
\begin{align}\label{eq-rec-proofm1x2}
N_{n,m+1}(x_2)
=\frac{a_{n,m}(x_2)}{c_{n,m}(x_2)}N_{n,m}(x_2).
\end{align}

Let us determine the signs of $a_{n,m}(x_2)$, $c_{n,m}(x_2)$ and $N_{n,m}(x_2)$.
For $n\geq m+4$, it is clear that $0<x_2\leq 1/3$, and hence $c_{n,m}(x_2)>0$ by \eqref{eq-abcnmx}.
Notice that
\begin{align*}
a_{n,m}(x_2)=\frac{(m+2-n)(m^2-n^2+4m+3)2}{(m+1-n)(m+2-n)}-2n
=\frac{2(m+1)(n-m-3)}{n-m-1}>0
\end{align*}
for $n\geq m+4$.
By hypothesis, it follows that
$$
r_{n,m}^{+}<\frac{2}{(m-n)(m+1-n)}
<\frac{2}{(m+1-n)(m+2-n)}
=x_2<+\infty
$$
for $n\geq m+4$.
By Lemma \ref{lem-sign}, $N_{n,m}(x_2)<0$ for $n\geq m+4$.
Then by \eqref{eq-rec-proofm1x2} we have $N_{n,m+1}(x_2)<0$ for $n\geq m+4$.

Now we have proved \eqref{eq-zero-bounds-m+1} for $m\geq 1$ with $n\geq m+4$. By the inductive hypothesis, it follows that the inequalities in \eqref{eq-zero-bounds} hold for $m\geq 0$ and $n\geq m+3$.
By the procedure of the proof, it is clear that the equality in \eqref{eq-zero-bounds} holds only for $m=0$.
This completes the proof.
\qed

The following result is an immediate consequence of Theorem \ref{Thm-zero-bounds}. \begin{prop}
Let $m\geq 0$ and $n\geq m+3$. Denote by $r_{n,m}^{+}$ the positive zero of $N_{n,m}(x)$. Then $0<r_{n,m}^{+}\leq \frac{1}{3}$. In addition, for any fixed $m\geq 0$,
$$\lim_{n\rightarrow \infty} r_{n,m}^{+}=0.$$
\end{prop}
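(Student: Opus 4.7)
The proposition is a direct corollary of Theorem \ref{Thm-zero-bounds}, so the plan is simply to extract the two claims from the sandwich inequality \eqref{eq-zero-bounds} by elementary estimates.

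First, for the bound $0<r_{n,m}^{+}\leq 1/3$, I would use the upper bound in \eqref{eq-zero-bounds}, which can be rewritten as
\[
r_{n,m}^{+}\leq \frac{2}{(m-n)(m+1-n)}=\frac{2}{(n-m)(n-m-1)}.
\]
Under the hypothesis $n\geq m+3$, we have $n-m\geq 3$ and $n-m-1\geq 2$, so $(n-m)(n-m-1)\geq 6$, giving $r_{n,m}^{+}\leq 1/3$. Positivity of $r_{n,m}^{+}$ is automatic from its definition as a positive zero (or, equivalently, from the strictly positive lower bound in \eqref{eq-zero-bounds}). The equality $r_{n,m}^{+}=1/3$ is attained precisely in the case flagged in Theorem \ref{Thm-zero-bounds}, namely $m=0$, $n=3$.

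Second, for the limit statement with $m$ fixed, I would apply the squeeze theorem to \eqref{eq-zero-bounds}. The upper bound $\frac{2}{(n-m)(n-m-1)}$ is $O(n^{-2})$ and clearly tends to $0$. For the lower bound, rewrite it as
\[
\frac{2(n+1)}{(n-m-1)\bigl((n+1)^{2}+1-(m+2)^{2}\bigr)},
\]
which is also $O(n^{-2})$ as $n\to\infty$ with $m$ fixed, and so tends to $0$. Therefore $r_{n,m}^{+}\to 0$.

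There is no genuine obstacle here: the only thing to watch is sign bookkeeping when rewriting $(m-n)(m+1-n)$ and $(m+1-n)\bigl((m+2)^{2}-(n+1)^{2}-1\bigr)$ as positive quantities for $n\geq m+3$, which ensures the inequalities in \eqref{eq-zero-bounds} are being applied in the correct direction.
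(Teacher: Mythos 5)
Your proof is correct and follows exactly the route the paper intends: the paper states this proposition as an immediate consequence of Theorem \ref{Thm-zero-bounds}, and your derivation (using $(n-m)(n-m-1)\geq 6$ for $n\geq m+3$ to get the bound $1/3$, and the squeeze argument for the limit) supplies precisely the omitted elementary details, with the signs handled correctly.
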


\section{The monotonicity}\label{S-M}
This section is devoted to the study of the monotonicity of the positive zeros of $N_{n,m}(x)$.
The second main result of this paper is as follows.

\begin{theorem}\label{thm-monocity}
Let $m\geq 0$ and $n\geq m+3$. Denote by $r_{n,m}^{+}$ the positive zero of $N_{n,m}(x)$, then we have
\begin{align}
r_{n+1,m}^{+}<r_{n,m}^{+}, \qquad r_{n+1,m+1}^{+}<r_{n,m}^{+},\qquad n\geq m+3,
\end{align}
and
\begin{align}
r_{n,m}^{+}<r_{n,m+1}^{+},& \qquad n\geq m+4.
\end{align}
\end{theorem}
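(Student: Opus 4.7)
The plan is to translate each claimed inequality into a sign statement for $N_{\cdot,\cdot}$ at $x=r_{n,m}^{+}$ via Lemma \ref{lem-sign}: the three assertions $r_{n+1,m}^{+}<r_{n,m}^{+}$, $r_{n+1,m+1}^{+}<r_{n,m}^{+}$, and $r_{n,m}^{+}<r_{n,m+1}^{+}$ are equivalent to $N_{n+1,m}(r_{n,m}^{+})<0$, $N_{n+1,m+1}(r_{n,m}^{+})<0$, and $N_{n,m+1}(r_{n,m}^{+})>0$, respectively. I would treat them in the order (1), (3), (2), using only Lemma \ref{lemma-mono-on-n}, the recurrence \eqref{eq-rec-gelNaya-mv} evaluated at $x=r_{n,m}^{+}$, and the bounds from Theorem \ref{Thm-zero-bounds}.

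For (1), applying Lemma \ref{lemma-mono-on-n} with $n$ replaced by $n+1$ is permitted since $n\geq m+3$ gives $n+1\geq m+4$; it yields $N_{n,m}(x)>N_{n+1,m}(x)$ for $x>0$, and setting $x=r_{n,m}^{+}$ makes the left side vanish, so $N_{n+1,m}(r_{n,m}^{+})<0$.

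For (3), I would evaluate \eqref{eq-rec-gelNaya-mv} at $x=r_{n,m}^{+}$. Since $N_{n,m}(r_{n,m}^{+})=0$, the first term drops and
\[
N_{n,m+1}(r_{n,m}^{+})=\frac{b_{n,m}(r_{n,m}^{+})}{c_{n,m}(r_{n,m}^{+})}\,N_{n-1,m}(r_{n,m}^{+}).
\]
Here $c_{n,m}(r_{n,m}^{+})>0$ is immediate from \eqref{eq-abcnmx}, and $N_{n-1,m}(r_{n,m}^{+})>0$ by Lemma \ref{lemma-mono-on-n} (valid since $n\geq m+4$). The upper bound $r_{n,m}^{+}\leq 2/[(n-m)(n-m-1)]\leq 1/6$ from Theorem \ref{Thm-zero-bounds} makes both factors $(m+2-n)(m+1-n)x-2$ and $x-1$ of $b_{n,m}(x)/n$ strictly negative at $x=r_{n,m}^{+}$, so $b_{n,m}(r_{n,m}^{+})>0$ and therefore $N_{n,m+1}(r_{n,m}^{+})>0$.

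For (2), I would apply \eqref{eq-rec-gelNaya-mv} with $n$ replaced by $n+1$ at $x=r_{n,m}^{+}$; this time the vanishing of $N_{n,m}(r_{n,m}^{+})$ kills the $b$-term, giving
\[
N_{n+1,m+1}(r_{n,m}^{+})=\frac{a_{n+1,m}(r_{n,m}^{+})}{c_{n+1,m}(r_{n,m}^{+})}\,N_{n+1,m}(r_{n,m}^{+}).
\]
The denominator is positive from \eqref{eq-abcnmx}, and by (1) the last factor is negative, so the task reduces to showing $a_{n+1,m}(r_{n,m}^{+})>0$. Writing $a_{n+1,m}(x)=Px-2(n+1)$ with $P=(m+1-n)[m^{2}+4m+3-(n+1)^{2}]$, both factors of $P$ are negative for $n\geq m+3$, hence $P>0$. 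The crux, and the main obstacle, is to notice the algebraic identity $(m+2)^{2}-1=m^{2}+4m+3$, which shows that the lower bound from Theorem \ref{Thm-zero-bounds} is precisely $2(n+1)/P$. Thus $r_{n,m}^{+}>2(n+1)/P$, equivalently $a_{n+1,m}(r_{n,m}^{+})=Pr_{n,m}^{+}-2(n+1)>0$, which completes the argument.
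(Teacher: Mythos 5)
Your proposal is correct, and its core mechanism is the same as the paper's: evaluate the recurrence \eqref{eq-rec-gelNaya-mv} at the zero $r_{n,m}^{+}$ so that one term drops, then fix the signs of $a$, $b$, $c$ there using the bounds of Theorem \ref{Thm-zero-bounds}, and convert back to zero comparisons via Lemma \ref{lem-sign}. Where you differ is the organization, and your version is leaner. The paper casts the argument as an induction on $m$, proving the chain $r_{n,m}^{+}<r_{n,m+1}^{+}<r_{n-1,m}^{+}$ with an explicit base case $m=0$ (computed from $N_{n,0}$ and $N_{n,1}$); but in its inductive step the induction hypothesis is never actually invoked — all the sign determinations come from Theorem \ref{Thm-zero-bounds}, exactly as in your direct argument — so your observation that no induction is needed is a genuine simplification. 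You also prove the second inequality as $N_{n+1,m+1}(r_{n,m}^{+})<0$ via the recurrence with $n$ shifted to $n+1$, which is the paper's step $N_{n,m+1}(r_{n-1,m}^{+})<0$ reindexed, and you get $N_{n-1,m}(r_{n,m}^{+})>0$ directly from Lemma \ref{lemma-mono-on-n} rather than from Lemma \ref{lem-sign} combined with $r_{n,m}^{+}<r_{n-1,m}^{+}$; both are valid. Your identification that the lower bound in Theorem \ref{Thm-zero-bounds} is exactly $2(n+1)/P$ with $P=(m+1-n)\bigl[(m+2)^2-1-(n+1)^2\bigr]$, i.e.\ precisely what makes $a_{n+1,m}(r_{n,m}^{+})>0$, is the same observation the paper uses at $r_{n-1,m}^{+}$. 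One cosmetic point: the crude estimate $r_{n,m}^{+}\leq 1/6$ alone would not force $(m+2-n)(m+1-n)x-2<0$ (that product grows like $(n-m)^2$); what does the work is the cited bound $r_{n,m}^{+}\leq 2/[(n-m)(n-m-1)]$, from which $(n-m-2)(n-m-1)r_{n,m}^{+}\leq 2(n-m-2)/(n-m)<2$, the same comparison the paper makes — worth writing out, but not a gap.
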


\begin{proof}
Note that $N_{n,m}(x)$ are continuous functions with respect to $x$ in $(-\infty,+\infty)$.
By Lemma \ref{lemma-mono-on-n} we have $N_{n,m}(x)>N_{n+1,m}(x)$ for $m\geq 0$, $n\geq m+3$ and $x>0$. It follows from Theorem \ref{thm-unipzo} that \begin{align}\label{eq-mono-1}
r_{n+1,m}^{+}<r_{n,m}^{+}
\end{align}
for $m\geq 0$ and $n\geq m+3$.

Notice that $r_{n,m+1}^{+}<r_{n-1,m}^{+}$ for $n\geq m+4$ if and only if $r_{n+1,m+1}^{+}<r_{n,m}^{+}$ for $n\geq m+3$.
So it remains to prove
\begin{align}\label{eq-inequ-2}
r_{n,m}^{+}<r_{n,m+1}^{+}<r_{n-1,m}^{+}, \qquad n\geq m+4.
\end{align}
Next we shall use mathematical induction on $m$ to prove \eqref{eq-inequ-2}.

First for $m=0$, we aim to prove
$$
r_{n,0}^{+}<r_{n,1}^{+}<r_{n-1,0}^{+},\qquad n\geq 4.
$$
Clearly, $0<r_{n,0}^{+}<r_{n-1,0}^{+}$ for $n\geq 4$ by \eqref{eq-mono-1}. Thus by the continuity of $N_{n,m}(x)$ and Theorem \ref{thm-unipzo}, it suffices to show that for $n\geq 4$,
\begin{align}\label{eq-s-condi-m0}
N_{n,1}(r_{n,0}^{+})>0 \quad {\rm and}\quad N_{n,1}(r_{n-1,0}^{+})<0.
\end{align}

From \eqref{eq-geNaPoCYZ} we have $N_{n,0}(x)=-{n\choose 2}x+1$. So $r_{n,0}^{+}=2/n(n-1)$.
By the recurrence relation \eqref{eq-rec-gelNaya-mv},
\begin{align}\label{eq-rec-Nn10}
N_{n,1}(x)
=\frac{(2-n)(-n^2+3)x-2n}{3(2-n)(1-n)x}N_{n,0}(x)
 +\frac{n[(2-n)(1-n)x-2](x-1)}{3(2-n)(1-n)x}N_{n-1,0}(x).
\end{align}
Obviously, $N_{n,0}(r_{n,0}^{+})=0$. Then
\begin{align*}
N_{n,1}(r_{n,0}^{+})
=\frac{n[(2-n)(1-n)r_{n,0}^{+}-2](r_{n,0}^{+}-1)}
 {3(2-n)(1-n)r_{n,0}^{+}}N_{n-1,0}(r_{n,0}^{+}).
\end{align*}
By Lemma \ref{lem-sign} we have $N_{n-1,0}(r_{n,0}^{+})>0$ since $0<r_{n,0}^{+}<r_{n-1,0}^{+}$. Observe that $(2-n)(1-n)r_{n,0}^{+}-2=2(n-2)/n-2<0$, for $n\geq 4$. It is clear that $r_{n,0}^{+}-1<0$ and $3(2-n)(1-n)r_{n,0}^{+}>0$ for $n\geq 4$. Thus $N_{n,1}(r_{n,0}^{+})>0$ for $n\geq 4$.

In order to prove $N_{n,1}(r_{n-1,0}^{+})<0$, let us consider \eqref{eq-rec-Nn10} again. Clearly, $N_{n-1,0}(r_{n-1,0}^{+})=0$. Hence by \eqref{eq-rec-Nn10},
\begin{align*}
N_{n,1}(r_{n-1,0}^{+})
=\frac{(2-n)(-n^2+3)r_{n-1,0}^{+}-2n}{3(2-n)(1-n)r_{n-1,0}^{+}}
 N_{n,0}(r_{n-1,0}^{+}).
\end{align*}
By Lemma \ref{lemma-mono-on-n} we have $N_{n,0}(r_{n-1,0}^{+})<0$ since $r_{n,0}^{+}<r_{n-1,0}^{+}$. Notice that $(2-n)(-n^2+3)r_{n-1,0}^{+}-2n=2(n^2-3)/(n-1)-2n=2(n-3)/(n-1)>0$, for $n\geq 4$.
Clearly, $3(2-n)(1-n)r_{n-1,0}^{+}>0$ for $n\geq 4$. It follows that $N_{n,1}(r_{n-1,0}^{+})<0$. So we have  \eqref{eq-s-condi-m0}, and hence \eqref{eq-inequ-2} holds for $m=0$.

Now assume $r_{n,m-1}^{+}<r_{n,m}^{+}<r_{n-1,m-1}^{+}$ for $n\geq m+3$. We aim to show that
$$
r_{n,m}^{+}<r_{n,m+1}^{+}<r_{n-1,m}^{+}
$$
for $n\geq m+4$.
Clearly, $r_{n,m}^{+}<r_{n-1,m}^{+}$ for $n\geq m+4$ by \eqref{eq-mono-1}.
Therefore by the continuity of $N_{n,m}(x)$ and Theorem \ref{thm-unipzo}, it is sufficient to prove that for $n\geq m+4$,
\begin{align}\label{eq-s-condi-m}
N_{n,m+1}(r_{n,m}^{+})>0\quad {\rm and}\quad N_{n,m+1}(r_{n-1,m}^{+})<0.
\end{align}
For this purpose, let us recall the recurrence relation \eqref{eq-rec-gelNaya-mv}
\begin{align*}
N_{n,m+1}(x)
=\frac{a_{n,m}(x)}{c_{n,m}(x)}N_{n,m}(x)+\frac{b_{n,m}(x)}{c_{n,m}(x)}N_{n-1,m}(x),
\end{align*}
where
\begin{equation*}
\left\{
\begin{aligned}
&a_{n,m}(x)=(m+2-n)(m^2-n^2+4m+3)x-2n,\\[5pt]
&b_{n,m}(x)=n[(m+2-n)(m+1-n)x-2](x-1),\\[5pt]
&c_{n,m}(x)=(m+3)(m+2-n)(m+1-n)x.
\end{aligned}
\right.
\end{equation*}
It follows from \eqref{eq-rec-gelNaya-mv} that
\begin{align}\label{eq-Nnm1rnm}
N_{n,m+1}(r_{n,m}^{+})
=\frac{b_{n,m}(r_{n,m}^{+})}{c_{n,m}(r_{n,m}^{+})}N_{n-1,m}(r_{n,m}^{+}),
\end{align}
and
\begin{align}\label{eq-Nnm1n-1m}
N_{n,m+1}(r_{n-1,m}^{+})
=\frac{a_{n,m}(r_{n-1,m}^{+})}{c_{n,m}(r_{n-1,m}^{+})}N_{n,m}(r_{n-1,m}^{+}).
\end{align}

Let us first determine the sign of $N_{n,m+1}(r_{n,m}^{+})$.
By Theorem \ref{Thm-zero-bounds}, we have
$$
r_{n,m}^{+}\leq \frac{2}{(m-n)(m+1-n)}<\frac{2}{(m+2-n)(m+1-n)}
$$
for $m\geq 0$ and $n\geq m+3$.
So $(m+2-n)(m+1-n)r_{n,m}^{+}-2<0$. Since $r_{n,m}^{+}-1<0$, it follows that $b_{n,m}(r_{n,m}^{+})>0$. Clearly, $r_{n,m}^{+}>0$, and hence $c_{n,m}(r_{n,m}^{+})>0$ for $n\geq m+4$.
By Lemma \ref{lem-sign} we get $N_{n-1,m}(r_{n,m}^{+})>0$ since $0<r_{n,m}^{+}<r_{n-1,m}^{+}$ for $n\geq 4$. Thus by \eqref{eq-Nnm1rnm} we have $N_{n,m+1}(r_{n,m}^{+})>0$ for $n\geq m+4$.

It remains to prove $N_{n,m+1}(r_{n-1,m}^{+})<0$ for $n\geq m+4$.
By Theorem \ref{Thm-zero-bounds}, we have
$$
r_{n-1,m}^{+}>\frac{2n}{(m+2-n)((m+2)^2-n^2-1)}
$$
for $m\geq 0$ and $n\geq m+4$. Hence $a_{n,m}(r_{n-1,m}^{+})>0$ for $n\geq m+4$.
It is clear that $r_{n-1,m}^{+}>0$ and hence $c_{n,m}(r_{n-1,m}^{+})>0$, for $n\geq m+4$.
By Lemma \ref{lem-sign} we have $N_{n,m}(r_{n-1,m}^{+})<0$ because $r_{n,m}^{+}<r_{n-1,m}^{+}$.
Then by \eqref{eq-Nnm1n-1m} it follows that $N_{n,m+1}(r_{n-1,m}^{+})<0$ for $m\geq 0$ and $n\geq m+4$.

Since \eqref{eq-s-condi-m} has been proved, it follows that
$$
r_{n,m}^{+}<r_{n,m+1}^{+}<r_{n-1,m}^{+}
$$
for $n\geq m+4$.
By the inductive hypothesis, we have \eqref{eq-inequ-2} holds for $n\geq m+4$.
This completes the proof.
\end{proof}

\noindent{\bf Acknowledgements.}
This work was supported by the National Natural Science Foundation of China under Grant Nos. 11771330 and 11971203.

\end{document}